\numberwithin{equation}{section}
\newtheorem{theorem}{Theorem}[section]
\newtheorem{corollary}[theorem]{Corollary}
\newtheorem{prop}[theorem]{Proposition}
\newtheorem{lemma}[theorem]{Lemma}
\newtheorem{case}{Case}
\newcommand{\ttt}{\mathcal{T}}
\newcommand{\aaa}{\mathbb{A}}
\newcommand{\ddd}{\mathcal{D}}
\theoremstyle{definition}
\newtheorem{ex}[theorem]{Example}
\newcommand*{\Le}{\leqslant}
\newcommand*{\Ge}{\geqslant}
\begin{document}

\title[The single equality $A^{*n}A^n = (A^*A)^n$ does  not
imply quasinormality]{The single equality $A^{*n}A^n =
(A^*A)^n$ does not imply the quasinormality of
weighted shifts on rootless directed trees}
   \author[P. Pietrzycki]{Pawe{\l} Pietrzycki}
   \subjclass[2010]{Primary 47B20, 47B33; Secondary
47B37} \keywords{Quasinormal operator, bilateral
weighted shift, weighted shift on a directed tree,
composition operator in an $L^2$-space}
   \address{Wydzia{\l} Matematyki i Informatyki, Uniwersytet
Jagiello\'{n}ski, ul. {\L}ojasiewicza 6, PL-30348
Krak\'{o}w}
   \email{pawel.pietrzycki@im.uj.edu.pl}
   \begin{abstract}
It is proved that each bounded injective bilateral
weighted shift $W$ satisfying the equality
$W^{*n}W^{n}=(W^{*}W)^{n}$ for some integer $n\Ge 2$
is quasinormal. For any integer $n\Ge 2$, an example
of a bounded non-quasinormal weighted shift $A$ on a
rootless directed tree with one branching vertex which
satisfies the equality $A^{*n}A^{n}=(A^{*}A)^{n}$ is
constructed. It is also shown that such an example can
be constructed in the class of composition operators
in $L^2$-spaces over $\sigma$-finite measure spaces.
   \end{abstract}
   \maketitle
   \section{Introduction}
The class of bounded quasinormal operators was
introduced by A. Brown in \cite{brow}. Two different
definitions of unbounded quasinormal operators
appeared independently in \cite{kauf} and in
\cite{szaf}. As recently shown in \cite{jabl}, these
two definitions are equivalent. Following \cite{szaf},
we say that a closed densely defined operator $A$ in a
complex Hilbert space $\mathcal{H}$ is {\em
quasinormal} if $A$ commutes with the spectral measure
$E$ of $|A|$, i.e $E(\sigma) A \subset CE(\sigma)$ for
all Borel subsets $\sigma$ of the nonnegative part of
the real line. By \cite[Proposition 1]{szaf}, a closed
densely defined operator $A$ in $\mathcal{H}$ is
quasinormal if and only if $U|A|\subset |A|U$, where
$A = U|A|$ is the polar decomposition of $A$ (cf.\
\cite[Theorem 7.20]{weid}). It is well-known that
quasinormal operators are always subnormal and that
the reverse implication does not hold in general. Yet
another characterization of quasinormality of
unbounded operators states that a closed densely
defined operator $A$ is quasinormal if and only if the
equality $A^{*n}A^{n}=(A^{*}A)^{n}$ holds for $n=2,3$
(see \cite[Theorem 3.6]{jabl}; see also \cite{Jib} for
the case of bounded operators and \cite[p.\ 63]{embry}
for a prototype of this characterization). For more
information on quasinormal operators we refer the
reader to \cite{brow,conw}, the bounded case, and to
\cite{kauf,szaf,maj,jabl}, the unbounded one.

In view of the above discussion, the question arises
as to whether the single equality
$A^{*n}A^{n}=(A^{*}A)^{n}$ with $n\Ge 2$ implies the
quasinormality of $A$. It turns out that the answer to
this question is in the negative. In fact, as recently
shown in \cite[Example 5.5]{jabl}, for every integer
$n\Ge 2$, there exists a weighted shift $A$ on a
rooted and leafless directed tree with one branching
vertex such that
   \begin{equation}\label{qqq}
\text{$(A^{*} A)^n=A^{*n}A^{n}$ and $(A^{*} A)^k \neq
A^{*k}A^{k}$ for all $k\in\{2,3,\ldots\}\setminus
\{n\}$.}
   \end{equation}
It remained an open question as to whether such
construction is possible on a rootless and leafless
directed tree. This is strongly related to the
question of the existence of a composition operator
$A$ in an $L^2$-space (over a $\sigma$-finite measure
space) which satisfies \eqref{qqq}. In this paper, we
will construct for every integer $n\Ge 2$ examples of
bounded (necessarily non-quasinormal) weighted shifts
$A$ on a rootless and leafless directed tree with one
branching vertex which satisfy \eqref{qqq} (cf.\
Theorem \ref{glowne}). This combined with the fact
that every weighted shift on a rootless directed tree
with nonzero weights is unitarily equivalent to a
composition operator in an $L^2$-space (see
\cite[Theorem 3.2.1]{memo} and \cite[Lemma 4.3.1]{9})
yields examples of composition operators satisfying
\eqref{qqq} (cf.\ Theorem \ref{glowne}).

It was observed in \cite[p.\ 144]{jabl} that a
unilateral or a bilateral injective weighted shift $W$
which satisfies the equality $(W^{*}W)^k=(W^{*})^k
W^k$ for $k=2$ is quasinormal, and that the same is
true for $k=3$ provided $W$ is bounded. In the present
paper we will show that in the class of bounded
injective bilateral weighted shifts, the single
equality $W^{*n}W^{n}=(W^{*}W)^{n}$ with $n\Ge 2$ does
imply quasinormality (cf.\ Theorem \ref{mainX}). This
is no longer true for unbounded ones even for $k=3$
(cf.\ Example \ref{przyk}).
   \section{Preleminaries}
In this paper we use the following notation. The
fields of rational, algebraic, real and complex
numbers are denoted by $\mathbb{Q}$, $\mathbb{A}$,
$\mathbb{R}$ and $\mathbb{C}$, respectively. The
symbols $\mathbb{Z}$, $\mathbb{Z}_{+}$, $\mathbb{N}$
and $\mathbb{R}_+$ stand for the sets of integers,
nonnegative integers, positive integers and
nonnegative real numbers, respectively. The field of
rational functions in $x$ with rational coefficients
is denoted by $\mathbb{Q}(x)$. We write $\mathbb Z[x]$
for the ring of all polynomials in $x$ with integer
coefficients.

Let $A$ be a linear operator in a complex Hilbert
space $\mathcal{H}$. Denote by $\mathcal{D}(A)$, $\bar
A$ and $A^*$ the domain, the closure and the adjoint
of $A$ respectively (provided they exist). A subspace
$\mathcal E$ of $\mathcal{D}(A)$ is called a {\em
core} for $A$ if $\mathcal E$ is dense in
$\mathcal{D}(A)$ with respect to the graph norm of
$A$. We write $\boldsymbol{B}(\mathcal{H})$ for the
set of all bounded operators in $\mathcal{H}$ whose
domain are equal to $\mathcal{H}$.

In the present paper, by a {\em classical weighted
shift} we mean either a unilateral weighted shift $W$
in $\ell^2$ or a bilateral weighted shift $W$ in
$\ell^2(\mathbb{Z})$. To be more precise, $W$ is
understood as the product $VD$, where, in the
unilateral case, $V$ is the unilateral isometric shift
on $\ell^2$ of multiplicity 1 and $D$ is a diagonal
operator in $\ell^2$ with diagonal elements
$\{\lambda_n\}_{n=0}^{\infty}$; in the bilateral case,
$V$ is the bilateral unitary shift on $\ell^2
(\mathbb{Z})$ of multiplicity 1 and $D$ is a diagonal
operator in $\ell^2 (\mathbb{Z})$ with diagonal
elements $\{\lambda_n\}_{n=-\infty}^{\infty}$. In
fact, $W$ is a unique closed linear operator in
$\ell^2$ (respectively, $\ell^2 (\mathbb{Z})$) such
that the linear span of the standard orthonormal basis
$\{e_n\}_{n=0}^{\infty}$ of $\ell^2$ (respectively,
$\{e_n\}_{n=-\infty}^{\infty}$ of $\ell^2
(\mathbb{Z})$) is a core for $W$ and
   \begin{equation}  \label{Wen}
We_n = \lambda_n e_{n+1} \quad \textup{for}\quad n\in
\mathbb{Z}_{+} \quad(\textup{respectively, } n\in
\mathbb{Z}).
   \end{equation}
Suppose $\ttt = (V; E)$ is a directed tree ($V$ and
$E$ are the sets of vertices and edges of $\ttt$,
respectively). If $\ttt$ has a root, we denote it by
$\textrm{ root}$. Put $V^{\circ}= V
\setminus\{\textrm{root}\}$ if $\ttt$ has a root and
$V^{\circ}=V$ otherwise. For every $u \in V^{\circ}$,
there exists a unique $v \in V$, denoted by $par(u)$,
such that $(v; u) \in E$. For any vertex $u \in V$ we
put $Chi(u) = \{v \in V : (u, v) \in E\}$. The Hilbert
space of square summable complex functions on $V$
equipped with the standard inner product is denoted by
$\ell^2(V )$. For $u \in V$, we define $e_u \in
\ell^2(V)$ to be the characteristic function of the
one-point set $\{u\}$.

Given a system $ \lambda=\{\lambda_v \}_{v \in
V^{\circ}} $ of complex numbers, we define the
operator $S_\lambda$ in $\ell^2(V)$, which is called a
\textit{weighted shift} on $\ttt$ with weights
$\lambda$, as follows
   \begin{equation*}
\ddd(S_{\lambda}) =\{ f\in \ell^2(V ):
\varLambda_{\ttt} f \in \ell^2(V )\}\quad \textup{and}
\quad S_{\lambda} = \varLambda_{\ttt} f \quad
\textup{for} \quad f \in \ddd(S_\lambda),
   \end{equation*}
where
   \begin{displaymath}
(\varLambda_{\ttt} f)(v) =\left\{\begin{array}{ll}
\lambda_v f(par(v)) & \textrm{if  $v \in V^{\circ},$}\\
0 & \textrm{otherwise}.\end{array} \right.
   \end{displaymath}
We refer the reader to \cite{memo} for more details on
weighted shifts on directed trees and their relations
to classical weighted shifts.

Let us recall some useful properties of weighted
shifts on directed trees we need in this paper.
   \begin{prop}[\mbox{\cite[Proposition 3.1.3]{memo}}]
Let $S_\lambda$ be a weighted shift on a directed tree
$\ttt$ with weights $ \lambda=\{\lambda_v \}_{v \in
V^{\circ}}$. Then the following assertions hold:
   \begin{enumerate}
\item[(i)] $S_\lambda$ is a closed operator,
\item[(ii)] $e_u \in \ddd(S_\lambda)$ if and only if $\sum_{v\in Chi(u)}
|\lambda_v|^2<\infty$ and in this case
   \begin{equation*}
S_\lambda e_u=\sum_{v\in Chi(u)} \lambda_v e_v, \qquad
\|S_\lambda e_u\|=\sum_{v\in Chi(u)} |\lambda_v|^2,
   \end{equation*}
   \item[(iii)] $S_\lambda$ is densely defined if
and only if $e_u \in D(S_\lambda) $ for every $u\in
V$.
   \end{enumerate}
   \end{prop}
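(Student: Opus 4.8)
The plan is to treat the three assertions in turn, exploiting throughout the elementary fact that norm convergence in $\ell^2(V)$ forces pointwise convergence at every vertex, since for each $w\in V$ the evaluation $f\mapsto f(w)=\langle f,e_w\rangle$ is a bounded functional.

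For (i) I would verify closedness directly from the definition of $S_\lambda$. Suppose $f_k\in\ddd(S_\lambda)$ with $f_k\to f$ and $S_\lambda f_k\to g$ in $\ell^2(V)$. Pointwise convergence gives $f_k(par(v))\to f(par(v))$ for every $v\in V^{\circ}$, hence $(S_\lambda f_k)(v)=\lambda_v f_k(par(v))\to\lambda_v f(par(v))=(\varLambda_\ttt f)(v)$. On the other hand $(S_\lambda f_k)(v)\to g(v)$, so $g=\varLambda_\ttt f$ pointwise on $V$ (at the root both sides vanish). Since $g\in\ell^2(V)$, this shows $\varLambda_\ttt f\in\ell^2(V)$, i.e. $f\in\ddd(S_\lambda)$ and $S_\lambda f=g$, which is exactly closedness.

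For (ii) I would simply evaluate $\varLambda_\ttt e_u$. Because $e_u(par(v))=1$ precisely when $par(v)=u$, that is when $v\in Chi(u)$, and vanishes otherwise, one gets $(\varLambda_\ttt e_u)(v)=\lambda_v$ for $v\in Chi(u)$ and $0$ elsewhere. Thus $\varLambda_\ttt e_u=\sum_{v\in Chi(u)}\lambda_v e_v$ as a function on $V$, with square norm $\sum_{v\in Chi(u)}|\lambda_v|^2$. Now $e_u\in\ddd(S_\lambda)$ means exactly that this function lies in $\ell^2(V)$, which is the stated summability condition, and the displayed formulas for $S_\lambda e_u$ and its norm follow immediately.

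For (iii) the implication that $e_u\in\ddd(S_\lambda)$ for all $u$ implies dense definedness is immediate, since then $\ddd(S_\lambda)$ contains the linear span of the orthonormal basis $\{e_u\}_{u\in V}$, which is dense. The converse is the one step I expect to require a genuine idea, and it is the main obstacle. Assume $S_\lambda$ is densely defined but, for contradiction, $e_{u_0}\notin\ddd(S_\lambda)$ for some $u_0$; by (ii) this means $\sum_{v\in Chi(u_0)}|\lambda_v|^2=\infty$. For any $f\in\ddd(S_\lambda)$ we have $\sum_{v\in V^{\circ}}|\lambda_v|^2|f(par(v))|^2=\|\varLambda_\ttt f\|^2<\infty$; restricting this nonnegative sum to $v\in Chi(u_0)$, where $par(v)=u_0$, yields $|f(u_0)|^2\sum_{v\in Chi(u_0)}|\lambda_v|^2<\infty$, which forces $f(u_0)=0$. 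Hence $\ddd(S_\lambda)$ is contained in the proper closed subspace $\{f:f(u_0)=0\}$, contradicting density. This density-to-pointwise-vanishing argument is precisely the delicate point, and it completes the proof.
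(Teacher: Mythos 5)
Your proof is correct and complete, and it follows essentially the same route as the source this proposition is quoted from: the paper itself contains no proof (it cites \cite[Proposition 3.1.3]{memo} verbatim), and the argument there is exactly yours --- closedness from continuity of the point evaluations $f\mapsto f(w)=\langle f,e_w\rangle$, part (ii) by computing $\varLambda_{\ttt}e_u$ pointwise, and the nontrivial direction of (iii) by observing that divergence of $\sum_{v\in Chi(u_0)}|\lambda_v|^2$ forces every $f\in\ddd(S_\lambda)$ to satisfy $f(u_0)=0$, so that $\ddd(S_\lambda)\subset\{e_{u_0}\}^{\perp}$, contradicting density. One minor remark: as transcribed in the paper the norm identity contains a typo and should read $\|S_\lambda e_u\|^2=\sum_{v\in Chi(u)}|\lambda_v|^2$; your phrase ``square norm'' treats this correctly.
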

   \begin{prop}[\mbox{\cite[Proposition 3.1.8]{memo}}]
   \label{ogrS} Let $S_\lambda$ be a weighted shift on
a directed tree $\ttt$ with weights $\lambda =
\{\lambda_u\}_{u\in V^\circ}$. Then the following
conditions are equivalent:
\begin{enumerate}
\item[(i)] $\ddd(S_\lambda)=\ell^2(V)$,

\item[(ii)] $S_\lambda \in  \boldsymbol B(\ell^2(V ))$,

\item[(iii)] $\sup_{u\in V} \sum_{v\in Chi(u)}
|\lambda_v|^2 < \infty$.
\end{enumerate}
Moreover, if $ S_\lambda \in \boldsymbol B(\ell^2(V
))$, then
   \begin{equation*}
\|S_\lambda\| = \sup_{u\in V} \|S_\lambda e_u\| =
\sqrt{\sup_{u\in V}\sum_{v\in Chi(u)} |\lambda_v|^2}.
   \end{equation*}
\end{prop}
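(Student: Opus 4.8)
The plan is to reduce the tree structure to a single diagonal-type computation and then read off all three equivalences and the norm formula from it. The key observation is that the children sets partition the non-root vertices: since every $v \in V^\circ$ has a unique parent, the sets $Chi(u)$, $u \in V$, are pairwise disjoint and their union is $V^\circ$. First I would use this to compute, for any $f$ in the domain of $S_\lambda$,
\[
\|\varLambda_{\ttt} f\|^2 = \sum_{v \in V^\circ} |\lambda_v|^2\, |f(par(v))|^2 = \sum_{u \in V} |f(u)|^2 \sum_{v \in Chi(u)} |\lambda_v|^2,
\]
where the second equality groups the sum over $v$ according to its parent $u = par(v)$, using $f(par(v)) = f(u)$ for $v \in Chi(u)$. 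Writing $c_u := \sum_{v \in Chi(u)} |\lambda_v|^2 = \|S_\lambda e_u\|^2$ (finite whenever $e_u$ lies in the domain, by the preceding proposition), this reads $\|S_\lambda f\|^2 = \sum_{u \in V} c_u\, |f(u)|^2$, exhibiting $S_\lambda$ as a shift composed with multiplication by $\sqrt{c_u}$.

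From this identity the equivalences follow quickly. For (iii)$\Rightarrow$(ii), if $M := \sup_{u} c_u < \infty$ then $\|\varLambda_{\ttt} f\|^2 \Le M \|f\|^2$ for every $f \in \ell^2(V)$, so $\varLambda_{\ttt} f \in \ell^2(V)$ for all $f$; hence $\ddd(S_\lambda) = \ell^2(V)$ and $S_\lambda$ is bounded with $\|S_\lambda\| \Le \sqrt{M}$. The implication (ii)$\Rightarrow$(i) is immediate from the definition of $\boldsymbol{B}(\ell^2(V))$. For (i)$\Rightarrow$(iii) I would invoke that $S_\lambda$ is closed (part (i) of the preceding proposition): a closed operator defined on all of $\ell^2(V)$ is bounded by the closed graph theorem, whence $c_u = \|S_\lambda e_u\|^2 \Le \|S_\lambda\|^2$ for every $u$ gives $\sup_{u} c_u < \infty$.

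Finally, for the norm formula, the upper bound $\|S_\lambda\| \Le \sqrt{\sup_{u} c_u}$ is already recorded above, while testing on the unit vectors $e_u$ yields $\|S_\lambda\| \Ge \|S_\lambda e_u\| = \sqrt{c_u}$ for each $u$, hence $\|S_\lambda\| \Ge \sqrt{\sup_{u} c_u}$; combining the two bounds gives
\[
\|S_\lambda\| = \sup_{u \in V} \|S_\lambda e_u\| = \sqrt{\sup_{u \in V} \sum_{v \in Chi(u)} |\lambda_v|^2}.
\]
There is no real obstacle beyond getting the reindexing in the first display correct: that one orthogonality identity carries the entire argument, and everything afterward is a formal consequence of it together with the closed graph theorem.
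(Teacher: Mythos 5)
Your proof is correct. Note that the paper itself offers no proof of this proposition --- it is imported verbatim from \cite{memo} --- and your argument (the observation that the sets $Chi(u)$, $u \in V$, partition $V^\circ$, giving $\|\varLambda_{\ttt} f\|^2 = \sum_{u\in V}|f(u)|^2\sum_{v\in Chi(u)}|\lambda_v|^2$, combined with closedness of $S_\lambda$ and the closed graph theorem for the implication (i)$\Rightarrow$(iii)) is essentially the same as the one given in that reference, so there is nothing to flag.
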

\begin{prop}[\mbox{\cite[Proposition 8.1.7]{memo}}]
\label{js} Let $n \in \mathbb{Z_+}$. If $S_\lambda\in
\boldsymbol B(\ell^2(V ))$ is a weighted shift on a
directed tree $\ttt$ with weights $\lambda =
\{\lambda_v\}_{v\in V^\circ}$, then the following two
conditions are equivalent:
\end{prop}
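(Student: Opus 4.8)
The two conditions in question are the operator identity $S_{\lambda}^{*n}S_{\lambda}^{n}=(S_{\lambda}^{*}S_{\lambda})^{n}$ and an explicit requirement on the products of weights along the directed paths of length $n$; the plan is to reduce both to a comparison of diagonal coefficients in the orthonormal basis $\{e_{u}\}_{u\in V}$, exploiting that every vertex of $V^{\circ}$ has a unique parent. I write $Chi^{\langle n\rangle}(u)$ for the set of vertices reachable from $u$ by a directed path of length exactly $n$, and for such a $v$ I set $\lambda_{v|n}=\prod_{j=0}^{n-1}\lambda_{par^{\langle j\rangle}(v)}$, the product of the weights along the unique path from $u$ to $v$. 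The first step is to establish the closed form
\begin{equation*}
S_{\lambda}^{n}e_{u}=\sum_{v\in Chi^{\langle n\rangle}(u)}\lambda_{v|n}\,e_{v}\qquad(u\in V)
\end{equation*}
by induction on $n$. The base case $n=1$ is \cite[Proposition 3.1.3]{memo}, and in the passage from $n-1$ to $n$ I would apply $S_{\lambda}$ termwise to $S_{\lambda}^{n-1}e_{u}$; boundedness of $S_{\lambda}$ (Proposition \ref{ogrS}) lets me interchange $S_{\lambda}$ with the infinite sum and guarantees that the resulting vector lies in $\ell^{2}(V)$, so no domain issue arises.

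The decisive structural point comes next. For distinct $u,u'\in V$ the descendant sets $Chi^{\langle n\rangle}(u)$ and $Chi^{\langle n\rangle}(u')$ are disjoint, since iterating the parent map shows that a vertex has at most one ancestor at each level; hence the vectors $S_{\lambda}^{n}e_{u}$ have pairwise disjoint supports and are mutually orthogonal. Consequently $S_{\lambda}^{*n}S_{\lambda}^{n}$ is diagonal in $\{e_{u}\}$, with
\begin{equation*}
S_{\lambda}^{*n}S_{\lambda}^{n}e_{u}=\|S_{\lambda}^{n}e_{u}\|^{2}e_{u}=\Bigl(\sum_{v\in Chi^{\langle n\rangle}(u)}|\lambda_{v|n}|^{2}\Bigr)e_{u}.
\end{equation*}
By the same disjointness at level one, $S_{\lambda}^{*}S_{\lambda}$ is diagonal with entries $\|S_{\lambda}e_{u}\|^{2}=\sum_{v\in Chi(u)}|\lambda_{v}|^{2}$ (see \cite[Proposition 3.1.3]{memo}), so that $(S_{\lambda}^{*}S_{\lambda})^{n}e_{u}=\|S_{\lambda}e_{u}\|^{2n}e_{u}$.

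Both $S_{\lambda}^{*n}S_{\lambda}^{n}$ and $(S_{\lambda}^{*}S_{\lambda})^{n}$ are therefore diagonal, so they coincide precisely when their diagonals do; comparing the displayed coefficients yields the asserted equivalence, namely that the identity holds if and only if
\begin{equation*}
\sum_{v\in Chi^{\langle n\rangle}(u)}|\lambda_{v|n}|^{2}=\Bigl(\sum_{v\in Chi(u)}|\lambda_{v}|^{2}\Bigr)^{n}\qquad\text{for every }u\in V.
\end{equation*}
I expect the only delicate part to be the bookkeeping in the inductive step: verifying that $\bigcup_{w\in Chi^{\langle n-1\rangle}(u)}Chi(w)=Chi^{\langle n\rangle}(u)$ is a genuine partition and that the weights multiply correctly along each path. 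This is exactly where the tree hypothesis (a unique directed path from $u$ to each descendant) is used, and it is the same feature that makes the descendant sets disjoint and the images $S_{\lambda}^{n}e_{u}$ orthogonal; the analytic points---convergence of the sums and commuting $S_{\lambda}$ past an infinite sum---are routine once boundedness is invoked through Proposition \ref{ogrS}.
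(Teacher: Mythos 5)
The paper itself offers no proof of this proposition; it is quoted from \cite[Proposition 8.1.7]{memo}, so the only question is whether your argument is sound, and it is: the disjointness of the descendant sets $Chi^{\langle n\rangle}(u)$ for distinct $u$ makes the vectors $S_{\lambda}^{n}e_{u}$ mutually orthogonal, hence both $S_{\lambda}^{*n}S_{\lambda}^{n}$ and $(S_{\lambda}^{*}S_{\lambda})^{n}$ are diagonal with entries $\|S_{\lambda}^{n}e_{u}\|^{2}$ and $\|S_{\lambda}e_{u}\|^{2n}$ respectively, and comparing diagonals gives exactly condition (ii). This is essentially the same route as in the cited source (which likewise rests on the explicit formula for $S_{\lambda}^{n}e_{u}$ and orthogonality of images of basis vectors), so nothing further is needed.
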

\begin{enumerate}
\item[(i)] $(S^{*}_{\lambda}S_{\lambda})^n=(S^{*}_{\lambda})^nS^{n}_{\lambda}$,
\item[(ii)] $\parallel S_{\lambda}e_u\parallel^n=\parallel S^n_{\lambda}e_u
\parallel $ for all $u\in V$.
\end{enumerate}

   The basic facts on bounded composition operators in
$L^2$-spaces we need in this paper can be found in
\cite{nor} (see also \cite{b-j-j-s} for the case of
unbounded composition operators).
   \section{ Transcendentality of $\ln(\alpha)$}
The irrationality of $e$ was established by Euler in
1744 and that of $\pi$ was proven by Johann Heinrich
Lambert in 1761. Their transcendence was proved about
a century later by Hermite and Lindemann respectively.
A generalisation of the above result was given by
Weierstrass in 1885, and is as follows.
   \begin{theorem}\label{wei}\cite[Theorem 1.4]{przest}
$($Lindemann-Weierstrass theorem$)$. For any finite
system of distinct algebraic numbers
$\alpha_1,\ldots,\alpha_{n}$, the numbers
$e^{\alpha_1}, \ldots, e^{\alpha_{n}}$ are linearly
independent over $\aaa$.
   \end{theorem}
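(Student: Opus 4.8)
Since the theorem is a classical one, the plan is to reconstruct the Hermite--Lindemann--Weierstrass argument, whose skeleton is a clash between an arithmetic lower bound and an analytic upper bound for a single cleverly built integer. Arguing by contradiction, suppose $e^{\alpha_1},\dots,e^{\alpha_{n}}$ were linearly dependent over $\aaa$, say $\sum_{j=1}^{n}\beta_j e^{\alpha_j}=0$ with $\beta_j\in\aaa$ not all zero; discarding vanishing terms I may assume every $\beta_j\neq 0$. The first block of work is a reduction to a highly symmetric situation. Choosing a number field $K$ normal over $\mathbb{Q}$ and containing all the $\alpha_j$ and $\beta_j$, with Galois group $G$, I would form the product $\prod_{\sigma\in G}\big(\sum_{j}\sigma(\beta_j)\,e^{\sigma(\alpha_j)}\big)=0$, expand it, and collect equal exponents. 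The resulting coefficients are symmetric in the $G$-conjugates, hence rational, and after clearing denominators I arrive at a relation $\sum_{s=1}^{m}C_s e^{\gamma_s}=0$ in which the $C_s$ are rational integers, the $\gamma_s$ are distinct algebraic numbers whose set is $G$-stable, and $C_s$ is constant along each $G$-orbit. The delicate point already here is to certify that this relation is genuinely nontrivial: one orders the exponents and tracks an extremal term, whose coefficient is a product of the nonzero $\beta_j$ and therefore cannot be cancelled.

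With $\sum_{s}C_s e^{\gamma_s}=0$ in hand, fix $\ell\in\mathbb{N}$ so that every $\ell\gamma_s$ is an algebraic integer, let $p$ be a large prime, and introduce
\[
f_k(x)=\ell^{\,mp}\,\frac{(x-\gamma_k)^{p-1}\prod_{j\neq k}(x-\gamma_j)^{p}}{(p-1)!},\qquad k=1,\dots,m .
\]
For each $k$ and each $s$ I use the Hermite identity $\int_0^{\gamma_s}e^{\gamma_s-x}f_k(x)\,dx=e^{\gamma_s}F_k(0)-F_k(\gamma_s)$, where $F_k=\sum_{t\ge 0}f_k^{(t)}$. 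Multiplying by $C_s$, summing over $s$, and invoking $\sum_{s}C_s e^{\gamma_s}=0$ to annihilate the $F_k(0)$ contribution, I obtain
\[
J_k:=-\sum_{s}C_s F_k(\gamma_s)=\sum_{s}C_s\int_0^{\gamma_s}e^{\gamma_s-x}f_k(x)\,dx .
\]
The quantity to be studied is the product $J:=\prod_{k=1}^{m}J_k$.

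The contradiction comes from estimating $J$ in two ways. On the analytic side the integral representation gives $|J_k|\le\sum_{s}|C_s|\,|\gamma_s|\,e^{|\gamma_s|}\sup|f_k|\le \hat C^{\,p}/(p-1)!$ for a constant $\hat C$ independent of $p$, so $|J|\le\big(\hat C^{\,p}/(p-1)!\big)^{m}\to 0$ as $p\to\infty$. On the arithmetic side the repeated factors $(x-\gamma_j)^{p}$ force every $f_k^{(t)}(\gamma_s)$ to be an algebraic integer divisible by $p$, the lone exception being $f_k^{(p-1)}(\gamma_k)=\ell^{\,mp}\prod_{j\neq k}(\gamma_k-\gamma_j)^{p}$; hence $J_k\equiv C_k\,\ell^{\,mp}\prod_{j\neq k}(\gamma_k-\gamma_j)^{p}\pmod p$ in the ring of integers of $K$. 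Because the $\gamma_s$ split into complete sets of $G$-conjugates and $G$ permutes the $J_k$ exactly as it permutes the $\gamma_k$, the product $J$ is $G$-invariant, so it is a rational integer; by Fermat's little theorem its residue modulo $p$ reduces to the $p$-independent integer $\big(\prod_k C_k\big)\,\ell^{\,m^2}\prod_{k\neq j}(\gamma_k-\gamma_j)$. Choosing $p$ larger than this fixed nonzero integer forces $J\not\equiv 0\pmod p$, so $J$ is a nonzero integer and $|J|\ge 1$, contradicting $|J|\to 0$.

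I expect the reduction of the first paragraph, rather than the estimates, to be the main obstacle: arranging the Galois symmetrization to produce a nontrivial integer relation with orbit-constant coefficients, and simultaneously ensuring that the single escaping derivative stays prime to $p$, is precisely the bookkeeping on which the argument turns. Once that symmetric structure is secured, the divisibility analysis (one derivative per point escapes the factor $p$), the symmetric-function passage from ``$J$ is an algebraic integer'' to ``$J\in\mathbb{Z}$'', and the elementary integral estimate are all routine.
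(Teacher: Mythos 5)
First, a point of comparison: the paper does not prove Theorem \ref{wei} at all --- it is quoted verbatim from Baker's book \cite{przest} and used as a black box --- so your proposal can only be measured against the classical argument itself. Your overall architecture is indeed the standard Hermite--Lindemann one, and the second half (the Hermite identity, the divisibility analysis in which only $f_k^{(p-1)}(\gamma_k)$ escapes the factor $p$, and the clash between $|J|\Ge 1$ and $|J|\to 0$) is set up essentially correctly, modulo two small repairs: you need \emph{every} $C_k\neq 0$ (not merely ``not all zero'') for $\big(\prod_k C_k\big)\ell^{m^2}\prod_{k\neq j}(\gamma_k-\gamma_j)$ to be nonzero, and Fermat's little theorem must be applied to the rational integer $D=\ell^{m^2}\prod_{k\neq j}(\gamma_k-\gamma_j)$ \emph{after} proving $D\in\mathbb{Z}$, since the congruence $a^p\equiv a \pmod p$ is not available for individual algebraic integers.

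The genuine gap is in your reduction, exactly at the point you yourself flagged as delicate. Expanding the single product $\prod_{\sigma\in G}\big(\sum_{j}\sigma(\beta_j)\,e^{\sigma(\alpha_j)}\big)$ and collecting equal exponents does \emph{not} give rational coefficients, nor coefficients constant on $G$-orbits; what the reindexing $J\mapsto J\circ\rho^{-1}$ actually proves is only equivariance, $C_{\rho(\gamma)}=\rho\big(C_{\gamma}\big)$. Concretely, take $K=\mathbb{Q}(\sqrt{2})$, $\beta_1=\sqrt{2}$, $\beta_2=1$, $\alpha_1=\sqrt{2}$, $\alpha_2=2\sqrt{2}$: the expansion collects to $-e^{0}-\sqrt{2}\,e^{\sqrt{2}}+\sqrt{2}\,e^{-\sqrt{2}}$, so on the orbit $\{\pm\sqrt{2}\}$ the coefficients are $\mp\sqrt{2}$ --- irrational and unequal. (The vanishing hypothesis cannot rescue this, since your claim is about the formal expansion, and inside a proof by contradiction each step must be a valid implication.) This is not cosmetic: your endgame needs the $C_s$ to be rational integers constant on orbits precisely so that $G$ permutes the $J_k$, making $J=\prod_k J_k$ a rational integer with a computable residue mod $p$; with merely equivariant algebraic $C_s$ that argument collapses. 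The standard repair is a two-step symmetrization. First conjugate \emph{only the coefficients}: in $\prod_{\sigma\in G}\big(\sum_j\sigma(\beta_j)e^{\alpha_j}\big)=0$ each exponent is a plain sum of $\alpha$'s, so the reindexing now fixes every exponent class and the collected coefficients are genuinely $G$-invariant, hence rational; the extremal exponent $|G|\alpha_{j_0}$ carries the coefficient $N_{K/\mathbb{Q}}(\beta_{j_0})\neq 0$, so after clearing denominators one has a nontrivial relation with integer coefficients $b_s$. Second, conjugate \emph{only the exponents}: in $\prod_{\tau}\big(\sum_s b_s e^{\tau(\delta_s)}\big)=0$ (product over the Galois group of a normal field containing the $\delta_s$) the coefficients are products of the integers $b_s$ and are constant on orbits by the same reindexing, with nontriviality again from the extremal term; discarding whole orbits with zero coefficient keeps the exponent set Galois-stable and yields exactly the hypotheses your Hermite-integral argument requires.
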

For the reader's convenience, we include the proof of
the following result which is surely folklore. This
fact will be used in Section \ref{Sec5}.
   \begin{corollary} \label{logarytm} $\ln(\alpha)$ is
transcendental for any algebraic number $\alpha\neq
0,1$.
\end{corollary}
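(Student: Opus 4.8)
The plan is to derive the transcendence of $\ln(\alpha)$ from the Lindemann--Weierstrass theorem (Theorem \ref{wei}) by contradiction. Suppose $\alpha$ is algebraic with $\alpha \neq 0, 1$, and suppose for contradiction that $\beta := \ln(\alpha)$ is also algebraic. The strategy is to produce a forbidden linear dependence among exponentials of distinct algebraic numbers.

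First I would observe that $\beta \neq 0$, since $\ln(\alpha) = 0$ would force $\alpha = 1$, which is excluded. Now consider the two \emph{distinct} algebraic numbers $0$ and $\beta$ (distinct precisely because $\beta \neq 0$; under the contradiction hypothesis both are algebraic). Theorem \ref{wei} then asserts that $e^{0} = 1$ and $e^{\beta} = \alpha$ are linearly independent over $\aaa$. The key step is to contradict this: since $\alpha$ is algebraic, the relation
\begin{equation*}
\alpha \cdot e^{0} + (-1)\cdot e^{\beta} = \alpha - \alpha = 0
\end{equation*}
is a nontrivial $\aaa$-linear combination (the coefficients $\alpha$ and $-1$ are algebraic and not both zero) that vanishes. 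This contradicts the linear independence guaranteed by Theorem \ref{wei}, so $\beta = \ln(\alpha)$ cannot be algebraic.

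There is essentially no hard analytic obstacle here once Theorem \ref{wei} is granted; the only subtlety worth flagging is a branch-of-logarithm convention, so I would fix $\ln$ to be a value of the complex logarithm satisfying $e^{\ln(\alpha)} = \alpha$, and note that the argument is insensitive to the choice of branch since any such value is nonzero and the exponential relation $e^{\beta} = \alpha$ is all that is used. The entire weight of the proof rests on invoking Theorem \ref{wei} with the right pair of exponents, namely $0$ and $\ln(\alpha)$, and then exhibiting the explicit vanishing combination with algebraic coefficients $\alpha$ and $-1$.
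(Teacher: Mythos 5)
Your proof is correct and is essentially identical to the paper's: both invoke Theorem \ref{wei} with the pair $0$ and $\ln(\alpha)$ and derive a contradiction from the obvious $\aaa$-linear dependence of $1$ and $\alpha$. You merely spell out two details the paper leaves implicit — that $\ln(\alpha)\neq 0$ (so the exponents are distinct, using $\alpha\neq 1$) and the explicit vanishing combination $\alpha\cdot e^{0}-e^{\ln(\alpha)}=0$ — which is a welcome but not substantively different elaboration.
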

   \begin{proof} Suppose that, contrary to our claim,
$\ln(\alpha)$ is algebraic. Then, by Theorem \ref{wei}
with $\alpha _1 = 0$ and $\alpha_2 = \ln(\alpha)$, we
see that $1$ and $\alpha=e^{\ln(\alpha)}$ are linearly
independent over $\aaa$, which gives a contradiction.
   \end{proof}
   \section{Bounded classical weighted shifts}
In this section, we will show that for every integer
$n$ greater than or equal to $2$, any bounded
bilateral weighted shift $W$ satisfying the equation
$(W^{*}W)^n=(W^{*})^n W^n$ is quasinormal (to simplify
terminology, we drop the adjective ``classical'' in
this section). We begin by proving two key lemmata.
   \begin{lemma}  \label{charp}
Let $k\in \mathbb{N}$. Then each root of the
polynomial
   \begin{align} \label{chara}
p(z)=kz^k-(z^{k-1}+z^{k-2}+ \ldots +1), \quad z \in
\mathbb C,
   \end{align}
except for $z=1$, is in the open unit disk centered at
$0$. Moreover, all roots of the polynomial $p(z)$ are
of multiplicity one.
   \end{lemma}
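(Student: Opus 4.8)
The plan is to pass from $p$ to the auxiliary polynomial $q(z):=(z-1)p(z)$, whose shape is far more transparent. A short telescoping computation, using $(z-1)(z^{k-1}+z^{k-2}+\cdots+1)=z^k-1$, gives
$$q(z)=kz^{k+1}-(k+1)z^k+1.$$
The point of this reformulation is twofold: root localization reduces to a clean modulus estimate, and the derivative of $q$ factors completely, which makes the multiplicity claim almost immediate. Since multiplying by $(z-1)$ only introduces one extra root at $z=1$, every statement about the roots of $p$ away from $z=1$ can be read off from the corresponding statement for $q$.

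For the localization I would argue directly on $p$. A complex number $z$ is a root of $p$ exactly when $kz^k=1+z+\cdots+z^{k-1}$; note that $z=0$ is not a root, since $p(0)=-1$. Taking moduli and applying the triangle inequality gives $k|z|^k\le 1+|z|+\cdots+|z|^{k-1}$. If $|z|>1$, then each term on the right is at most $|z|^{k-1}$, so $k|z|^k\le k|z|^{k-1}<k|z|^k$, a contradiction; hence every root lies in the closed unit disk. For a root on the unit circle, $|z|=1$, the chain collapses to the equality $|1+z+\cdots+z^{k-1}|=k=1+1+\cdots+1$, which is precisely equality in the triangle inequality for the $k$ unit vectors $1,z,\dots,z^{k-1}$. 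Equality forces these vectors to share a common argument; comparing the terms $z^0=1$ and $z^1=z$ then forces $z=1$. Thus the only root of $p$ lying on or outside the unit circle is $z=1$, which proves the first assertion.

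The multiplicity assertion I would settle through $q$. Differentiating yields $q'(z)=k(k+1)z^{k-1}(z-1)$, whose only zeros are $z=0$ and $z=1$. Since $q(0)=1\neq0$, the value $0$ is not a root of $q$, so the only candidate for a repeated root of $q$ is $z=1$; and because $q'$ has a simple zero there, $z=1$ is a root of $q$ of multiplicity exactly two. Consequently every root of $q$ other than $z=1$ is simple. Dividing by $(z-1)$, the polynomial $p=q/(z-1)$ has a simple root at $z=1$ and inherits the simple roots of $q$ elsewhere, so every root of $p$ has multiplicity one.

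The only genuinely delicate point is the boundary case $|z|=1$: one must invoke the equality condition in the triangle inequality rather than the crude bound, since the non-strict estimate alone does not rule out roots on the circle. Everything else is a routine telescoping identity together with a one-line differentiation.
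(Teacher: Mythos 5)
Your proof is correct, and it diverges from the paper's argument in one interesting place. The exterior estimate ($|z|>1$ impossible) and the multiplicity argument are essentially the paper's: the author also passes to $q(z)=(z-1)p(z)=kz^{k+1}-(k+1)z^k+1$, factors $q'(z)=k(k+1)z^{k-1}(z-1)$, and concludes that $q$ and $q'$ can only share the root $z=1$ (the paper cites Hungerford for the ``multiple root iff common root with the derivative'' criterion, where you use the slightly sharper multiplicity-drop fact to pin down that $z=1$ is a double root of $q$, hence a simple root of $p$ --- both are fine). The genuine difference is the boundary case $|z|=1$. The paper exploits that $p$ has real coefficients, so $\bar z=1/z$ is also a root; manipulating the two root equations yields $z^{k+1}=1$, and combining with $q(z)=0$ gives $z^k=1$, forcing $z=1$. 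You instead observe that a unimodular root makes the triangle inequality $|1+z+\cdots+z^{k-1}|\le k$ an equality, and the equality condition forces the unit vectors $1,z,\dots,z^{k-1}$ to share a common argument, hence $z=1$. Your route is shorter and more conceptual, at the cost of invoking the equality criterion for the triangle inequality (which you correctly flag as the delicate point and apply properly, since all the summands are nonzero unit vectors); the paper's route is purely algebraic and needs no such analytic criterion, but requires the small computational detour through $z^{k+1}=1$ and $z^k=1$. Either argument fully establishes the lemma.
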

   \begin{proof}
It is enough to consider the case of $k \Ge 2$.
Suppose $z$ is a complex number such $|z|>1$ and
$p(z)=0$. Then $|z^k|>|z^i|$ for $i=0,1,\ldots,k-1$.
This implies that
   \begin{equation*}
|kz^k|>|z^{k-1}+z^{k-2}+\ldots+1|=|kz^k|,
   \end{equation*}
which gives a contradiction. This means that all roots
of the polynomial $p(z)$ satisfies the inequality
$|z|\Le 1$.

It is clear that $p(1)=0$. We show that $1$ is in fact
the only root which lies on the unit circle. Suppose
that there exists $z\in \mathbb{C}$ such that $|z|=1$,
$p(z)=0$ and $z\neq 1$. Since the polynomial $p(z)$
has real coefficients, $\bar{z}=\frac{1}{z}$ is its
root as well. Hence we have
   \begin{align}\label{tu}
0&=kz^k-(z^{k-1}+\ldots+1),
   \\ \label{tu2}
0&=k\Big(\frac{1}{z}\Big)^k-\Big(\Big(\frac{1}{z}
\Big)^{k-1}+\ldots+1\Big).
   \end{align}
It follows from \eqref{tu2} that
   \begin{equation*}
k \frac{1}{z}=1+z+\ldots+z^{k-1}.
   \end{equation*}
This and (\ref{tu}) yield
   \begin{equation} \label{r2}
z^{k+1}=1.
   \end{equation}
On the other hand multiplying both sides of the
equality in \eqref{tu} by $z-1$ we get
   \begin{equation}\label{jjj}
kz^{k+1}-(k+1)z^k+1=0.
   \end{equation}
Applying \eqref{r2} and \eqref{jjj}, we see that
$z^k=1$ and so, by \eqref{r2}, $z=1$. This contradicts
the assumption that $z\neq 1$.

Now we will prove the ``moreover'' part of the
theorem. Using \cite[Theorem III.6.10]{Hug}, we easily
verify that $1$ is not a multiple root of the
polynomial $p(z)$. Suppose that the polynomial $p(z)$
has a multiple root different from $1$. Then clearly
the polynomial $q(z)=(z-1)p(z)$ has a multiple root
different from $1$. Applying \cite[Theorem
III.6.10]{Hug} again, we deduce that the polynomials
$q(z)$ and $q'(z)$ has a common root different from
$1$. Since the polynomials $q(z)= kz^{k+1}-(k+1)z^k+1$
and $q'(z)=k(k+1)z^{k-1}(z-1)$ have only one common
root $1$, we get a contradiction.
   \end{proof}
   \begin{lemma}\label{ociag} Let $k\in \mathbb
N$. Suppose that $\{a_n\}^{\infty}_{n=-\infty}$ is a
bounded sequence of real numbers that satisfies the
following recurrence relation
   \begin{equation*}
ka_n=a_{n+1}+\ldots+a_{n+k}, \quad n \in \mathbb Z.
   \end{equation*}
Then $\{a_n\}^{\infty}_{n=-\infty}$ is a constant
sequence.
   \end{lemma}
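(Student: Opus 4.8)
The plan is to treat the recurrence as a linear difference equation and to locate the roots of its characteristic polynomial by means of Lemma \ref{charp}. The case $k=1$ is immediate (the recurrence reads $a_n=a_{n+1}$), so assume $k\ge 2$. Substituting the trial solution $a_n=z^n$ into $ka_n=a_{n+1}+\cdots+a_{n+k}$ and dividing by $z^n$ leads to the characteristic equation $z^k+z^{k-1}+\cdots+z-k=0$; denote its left-hand side by $c(z)$. A direct computation gives the reciprocal identity $c(z)=-z^k p(1/z)$, where $p$ is the polynomial of Lemma \ref{charp}; hence $z$ is a root of $c$ exactly when $1/z$ is a root of $p$ (note that $0$ is a root of neither, since $p(0)=-1$ and $c(0)=-k$). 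By Lemma \ref{charp} the polynomial $p$ has $k$ simple roots, one equal to $1$ and the remaining $k-1$ lying in the open unit disk; passing to reciprocals, $c$ has exactly $k$ simple roots $z_0=1,z_1,\ldots,z_{k-1}$, with $|z_j|>1$ for $1\le j\le k-1$.

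Since $c$ has $k$ distinct nonzero roots and the recurrence can be solved in both directions (the coefficient of $a_{n+k}$ is $1$ and that of $a_n$ is $k$, both nonzero), the space of two-sided solutions is $k$-dimensional and is spanned by the sequences $n\mapsto z_j^n$. Thus every solution takes the form $a_n=\sum_{j=0}^{k-1}c_j z_j^n$ for suitable $c_0,\ldots,c_{k-1}\in\mathbb{C}$, and it remains only to show that boundedness forces $c_1=\cdots=c_{k-1}=0$.

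Because every $z_j$ with $j\ge 1$ has modulus greater than $1$, the corresponding terms decay as $n\to-\infty$, so boundedness in that direction carries no information and the entire burden falls on the behaviour as $n\to+\infty$. The main obstacle is therefore to prove that a nontrivial combination $\sum_{j\ge 1}c_j z_j^n$ with all $|z_j|>1$ cannot remain bounded as $n\to+\infty$. I would argue by isolating the dominant modulus $R=\max\{|z_j|:c_j\neq 0\}>1$: dividing by $R^n$ sends the strictly smaller terms to $0$ and leaves $\sum_{|z_j|=R}c_j(z_j/R)^n$, a combination of distinct unimodular exponentials. Such a sequence cannot tend to $0$, since its Cesàro mean-square $\frac1N\sum_{n=0}^{N-1}\bigl|\sum_{|z_j|=R}c_j(z_j/R)^n\bigr|^2$ converges to $\sum_{|z_j|=R}|c_j|^2>0$; consequently $\bigl|\sum_{j\ge 1}c_j z_j^n\bigr|$ grows like $R^n\to\infty$, contradicting boundedness. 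Hence $c_j=0$ for all $j\ge 1$, so that $a_n\equiv c_0$ is constant (and real, since the sequence is). The reciprocal identity $c(z)=-z^k p(1/z)$ is the linchpin that converts the root-location statement of Lemma \ref{charp} into precisely the form needed here.
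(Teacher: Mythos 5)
Your proof is correct, but it takes a genuinely different route from the paper's. You work with the forward recurrence directly: via the reciprocal identity $c(z)=-z^{k}p(1/z)$ you transfer Lemma \ref{charp} to the characteristic polynomial $c(z)=z^{k}+\cdots+z-k$, obtaining $k$ simple roots $1,z_1,\ldots,z_{k-1}$ with $|z_j|>1$; you then note that the space of two-sided solutions is $k$-dimensional (the recurrence can be solved in both directions, since the extreme coefficients $1$ and $k$ are nonzero), hence spanned by the geometric sequences $n\mapsto z_j^n$, and you kill the coefficients of the growing modes by a Ces\`aro mean-square argument (orthogonality of distinct unimodular exponentials), which is the one nontrivial analytic ingredient. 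The paper instead runs the sequence backwards: for each $l$ it sets $b^{(l)}_n=a_{l-n}$, which satisfies a one-sided recurrence whose characteristic polynomial is $\frac{1}{k}p(z)$ itself, so that the non-unit roots lie \emph{inside} the unit disk; it then invokes the standard one-sided representation theorem \cite[Theorem 3.1.1]{hall} and derives a contradiction by bounding the coefficients $A^{(l)}_j$ uniformly in $l$ via Cramer's rule (the initial data are uniformly bounded and the Vandermonde matrix is fixed), so that for large $l$ the decaying modes force $|a_r-a_{r-1}|$ below any prescribed $\varepsilon$. Your approach yields a slightly stronger structural conclusion --- the bounded two-sided solutions are exactly the constants --- and avoids the uniform-in-$l$ bookkeeping, at the price of having to justify the two-sided solution-space description and the fact that a nontrivial combination of distinct unimodular exponentials cannot tend to zero; the paper's reversal trick sidesteps that equidistribution-type fact entirely, using only the decay of powers of roots inside the disk together with an explicit determinant estimate.
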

   \begin{proof}
Without loss of generality, we can assume that $k \Ge
2$. Suppose that, contrary to our claim, the sequence
$\{a_n\}^{\infty}_{n=-\infty}$ is not constant. Then
there exist $r\in \mathbb{Z}$ and $\varepsilon>0$ such
that
   \begin{eqnarray}\label{eps}
|a_r - a_{r-1}| > \varepsilon.
   \end{eqnarray}
Given $l\in \mathbb{N}$, we define the sequence
$\{b^{(l)}_n\}_{n=0}^\infty$ by
   \begin{align}  \label{recrel}
b^{(l)}_n = a_{l-n}, \quad n \in \mathbb{Z}_+.
   \end{align}
Clearly, $\{b^{(l)}_n\}_{n=0}^\infty$ satisfies the
recurrence relation
   \begin{equation} \label{chara2}
kb^{(l)}_{n+k}=b^{(l)}_n+\ldots+b^{(l)}_{n+k-1}, \quad
n \in \mathbb Z_+,
   \end{equation}
with the initial values $b^{(l)}_0=a_{l}$, \ldots,
$b^{(l)}_{k-1}=a_{l-k+1}$. The polynomial
$\frac{1}{k}p(z)$, where $p(z)$ is as in
\eqref{chara}, is the characteristic polynomial of the
recurrence relation \eqref{chara2}. By Lemma
\ref{charp} and \cite[Theorem 3.1.1]{hall}, we have
   \begin{align}\label{rr}
b^{(l)}_n=A_1^{(l)}z_1^n + \ldots +
A_{k}^{(l)}z_{k}^n, \quad n \in \mathbb Z_+,
   \end{align}
where $z_1, \ldots, z_{k}$ are the roots of the
polynomial $\frac{1}{k}p(z)$, and $A^{(l)}_1, \ldots,
A^{(l)}_{k}$ are complex numbers depending on the
initial values $b^{(l)}_0, \ldots, b^{(l)}_{k-1}$. In
view of Lemma \ref{charp}, we may assume that $z_1=1$.
It follows from \eqref{rr} that $A^{(l)}_1, \ldots,
A^{(l)}_{k}$ is a solution of the system of linear
equations in unknowns $w_1, \ldots, w_k$:
   \begin{equation}\label{qwerty}
   \left\{
   \begin{array}{l}
b^{(l)}_0=w_1+\ldots+w_{k}\\
b^{(l)}_1=w_1z_1+\ldots+w_{k}z_{k}
   \\
   \vdots
   \\
b^{(l)}_{k-1}=w_1z_1^{k-1}+\ldots+w_{k}z_{k}^{k-1}.
   \end{array}
   \right.
   \end{equation}
Let $U$ be the matrix associated with the system
\eqref{qwerty}, i.e.,
   \begin{displaymath}
U =\left [\begin{array}{llll} 1 & 1 &\cdots& 1
   \\
z_1 & z_2 &\cdots&z_k
   \\
\vdots & \vdots&\cdots&\vdots
   \\
z_1^{k-1}&z_2^{k-1}&\cdots &
z_{k}^{k-1}\end{array}\right].
   \end{displaymath}
Since $U$ is a Vandermonde matrix, we deduce that
$\det U \neq 0$. Hence the system \eqref{qwerty} has a
unique solution which, by Cramer's Rule (cf.\
\cite[Corollary VII.3.8]{Hug}), is given by
   \begin{equation}   \label{alf}
A^{(l)}_j=\frac{\det U^{(l)}_j}{\det U}, \quad j = 1,
\ldots, k,
   \end{equation}
where $U_j^{(l)}$ is the matrix formed by replacing
the $j$th column of $U$ by the transpose of the row
vector $[b^{(l)}_0,b^{(l)}_1,
\ldots,b^{(l)}_{k-1}]=[a_l, a_{l-1}, \ldots
a_{l-k+1}]$.

By assumption, $C:=\sup\{|a_n|\colon n \in \mathbb Z\}
< \infty$. Set $L=k!C$. Note that
   \begin{equation} \label{dete}
|\det U_j^{(l)}| \Le L, \quad j=1, 2, \ldots, k.
   \end{equation}
Indeed, this can be deduced by estimating each summand
of $\det U^{(l)}_j$ (cf.\ \cite[Theorem VII.3.5]{Hug})
and using the fact that $z_1=1$ and
$p:=\max\{|z_i|\colon i=2, \ldots, k\} < 1$ (cf.\
Lemma \ref{charp}). Take $l\in \mathbb N$ such that $l
\Ge r$ and $2(k-1) p^{l-r} \frac{L}{|\det U|}\notag
<\varepsilon$. This combined with \eqref{alf} and
\eqref{dete} yields
   \begin{align*}
|a_r - a_{r-1}| & \overset{\eqref{recrel}}=
|b^{(l)}_{l-r}-b^{(l)}_{l-r+1}|
   \\ &\overset{\eqref{rr}}=|A_1^{(l)}z_1^{l-r} + \ldots
+ A_k^{(l)} z_{k}^{l-r} - (A_1^{(l)} z_1^{l-r+1} +
\ldots+A_k^{(l)}z_{k}^{l-r+1})|
   \\
& \hspace{+1.5ex} =|A_2^{(l)}z_2^{l-r} + \ldots +
A_k^{(l)} z_{k}^{l-r} - (A_2^{(l)} z_2^{l-r+1} +
\ldots+A_k^{(l)}z_{k}^{l-r+1})|
   \\
&\hspace{+1.5ex} \Le 2(k-1) p^{l-r} \frac{L}{|\det U|}
< \varepsilon,
   \end{align*}
which contradicts (\ref{eps}). This completes the
proof.
   \end{proof}
Now we are ready to prove the main result of this
section. Recall the well-known and easy to prove fact
that a quasinormal injective bilateral weighted shift
is a multiple of a unitary operator.
   \begin{theorem}  \label{mainX}
   Let $k\Ge 2$. Then any bounded
injective bilateral weighted shift $W$ that satisfies
the equality $(W^{*}W)^k=(W^{*})^k W^k$ is
quasinormal.
   \end{theorem}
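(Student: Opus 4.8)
The plan is to translate the operator identity $(W^{*}W)^{k}=(W^{*})^{k}W^{k}$ into a scalar recurrence for the moduli of the weights and then feed it into Lemma~\ref{ociag}. First I would evaluate both sides on the standard orthonormal basis. Writing $We_{n}=\lambda_{n}e_{n+1}$, one computes $W^{*}We_{n}=|\lambda_{n}|^{2}e_{n}$, hence $(W^{*}W)^{k}e_{n}=|\lambda_{n}|^{2k}e_{n}$, while $W^{k}e_{n}=\lambda_{n}\lambda_{n+1}\cdots\lambda_{n+k-1}\,e_{n+k}$ and therefore $(W^{*})^{k}W^{k}e_{n}=|\lambda_{n}|^{2}|\lambda_{n+1}|^{2}\cdots|\lambda_{n+k-1}|^{2}e_{n}$ (equivalently, this is Proposition~\ref{js} read off for the classical shift). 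Comparing coefficients, and using that $W$ is injective so that every $\lambda_{n}\neq 0$, the hypothesis is seen to be equivalent to the pointwise relation
\begin{equation*}
|\lambda_{n}|^{k-1}=|\lambda_{n+1}|\cdots|\lambda_{n+k-1}|,\qquad n\in\mathbb{Z}.
\end{equation*}

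Next I would take logarithms. Setting $a_{n}=\log|\lambda_{n}|$, which is well defined since $\lambda_{n}\neq 0$, this relation becomes $(k-1)a_{n}=a_{n+1}+\cdots+a_{n+k-1}$ for all $n\in\mathbb{Z}$, i.e.\ exactly the recurrence of Lemma~\ref{ociag} with $k-1$ in place of $k$ (note $k-1\Ge 1$). Thus, once the sequence $\{a_{n}\}_{n\in\mathbb{Z}}$ is known to be bounded, Lemma~\ref{ociag} forces it to be constant; then $|\lambda_{n}|\equiv c$ for some $c>0$, so $W$ equals $c$ times a bilateral shift with unimodular weights, which is unitary. Hence $W$ is a scalar multiple of a unitary operator and is therefore quasinormal, completing the proof. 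The final step is routine: a scalar multiple of a unitary has $|W|=cI$, so it trivially commutes with the spectral measure of $|W|$.

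The one genuine obstacle is verifying that $\{a_{n}\}$ is bounded, as Lemma~\ref{ociag} requires a two-sided bound. Boundedness from above is immediate: by Proposition~\ref{ogrS} one has $|\lambda_{n}|\Le\|W\|$, so $a_{n}\Le\log\|W\|$ for every $n$. Boundedness from below is the crux, and I expect this to be the main difficulty. To handle it I would pass to the nonnegative sequence $u_{n}=\log\|W\|-a_{n}\Ge 0$, which satisfies the same averaging recurrence $(k-1)u_{n}=u_{n+1}+\cdots+u_{n+k-1}$, and use the root information of Lemma~\ref{charp}. Indeed, the characteristic roots of this recurrence other than $z=1$ are the reciprocals of the roots of $\tfrac1{k-1}p(z)$, hence all lie strictly outside the closed unit disk, and $z=1$ is the only positive real root; consequently every nonconstant bilateral solution is unbounded above as $n\to+\infty$ (the dominant modes either alternate in sign or rotate with growing amplitude). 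Therefore the constraint $u_{n}\Ge 0$, equivalently $a_{n}\Le\log\|W\|$, already forces $\{a_{n}\}$ to be bounded, and in fact constant. In this way the root-location lemma simultaneously supplies the missing lower bound and, through Lemma~\ref{ociag}, the constancy of the weights, after which the reduction to a multiple of a unitary is immediate.
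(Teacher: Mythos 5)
Your reduction to the scalar recurrence and the upper bound are exactly as in the paper: Proposition \ref{js} together with injectivity gives $|\lambda_n|^{k-1}=|\lambda_{n+1}|\cdots|\lambda_{n+k-1}|$, hence $(k-1)a_n=a_{n+1}+\cdots+a_{n+k-1}$ with $a_n=\log|\lambda_n|\Le\log\|W\|$. The divergence --- and the gap --- is in how you get from the one-sided bound to constancy. Your pivotal assertion is that every nonconstant bilateral solution is ``unbounded above as $n\to+\infty$,'' from which you conclude that $u_n=\log\|W\|-a_n\Ge 0$ forces constancy. As stated this is not even the right implication: a sequence unbounded above is perfectly compatible with $u_n\Ge 0$; what you need is that a nonconstant solution takes \emph{negative} values. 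Your parenthetical ``the dominant modes either alternate in sign or rotate with growing amplitude'' gestures at this, but it is precisely the nontrivial point and is not proved. Writing $u_n=A_1+\sum_{i\Ge 2}A_iz_i^n$ with $|z_i|>1$ (legitimate, since by Lemma \ref{charp} the roots are simple and nonzero), the dominant part is $\rho^n g(n)$ with $g(n)=\sum_{|z_i|=\rho}A_ie^{in\theta_i}$, and when several modes of the same modulus interfere you must rule out that their sum stays nonnegative up to lower-order errors. That needs an actual argument: for instance, $g$ has Ces\`aro mean $0$ while $\frac{1}{N}\sum_{n\Le N}|g(n)|^2\to\sum|A_i|^2>0$, which is incompatible with $\liminf g\Ge 0$ because $g$ is bounded; or, more elegantly, Pringsheim's theorem applied to $\sum_n u_nt^n=\sum_i A_i/(1-z_it)$, whose singularity on the circle of convergence would have to sit at a positive real $1/z_i$, contradicting your (correct) observation that $z=1$ is the only positive real characteristic root. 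With such an argument supplied your route is valid and in fact makes Lemma \ref{ociag} redundant; without it, the proof has a hole exactly at its crux.

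For comparison, the paper sidesteps this analytic issue entirely. It first proves the two-sided bound by an elementary multiplicative argument: if some subsequence of weights tended to $0$, then, since each $\lambda_n$ is the geometric mean of the $k-1$ following weights, smallness propagates backwards from one very small weight to all earlier ones, contradicting the positivity of the first $k$ weights; the same geometric-mean observation then bounds the tail $\{\lambda_i\}_{i\Le 0}$ below. Only after boundedness is secured does it invoke Lemma \ref{ociag}, whose proof uses the characteristic roots in the stable (backward) direction, where they lie \emph{inside} the unit disk and no one-sided, Pringsheim-type statement about exponential sums is required.
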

   \begin{proof}  Let $W$ be a bounded injective bilateral weighted
shift with weights
$\{\lambda_n\}_{n=-\infty}^{\infty}$ (cf.\
\eqref{Wen}). Without loss of generality, we can
assume that $\lambda_n > 0$ for all $n\in \mathbb{Z}$
(cf.\ \cite{Shi}). Suppose that
$(W^{*}W)^k=(W^{*})^kW^k$ for some $k \Ge 2$. By
Proposition \ref{js}, we have
   \begin{eqnarray} \label{w11}
\lambda_n^k=\lambda_n
\lambda_{n+1}\cdots\lambda_{n+k-1}, \quad n \in
\mathbb{Z}.
   \end{eqnarray}
Since that operator $W$ is bounded, the sequence
$\{\lambda_n\}^{\infty}_{n=-\infty}$ is bounded as
well. We will show that there exists $c\in (0,\infty)$
such that $\lambda_n>c$ for every $n\in \mathbb{N}$.
If not, there exists a subsequence
$\{\lambda_{n_i}\}_{i=1}^{\infty}$ of
$\{\lambda_{n}\}_{n=1}^{\infty}$ such that
$\lambda_{n_i}\rightarrow 0$ as $i\rightarrow \infty$.
Set
   \begin{align}\label{dd}
d=\min_{i=1,\ldots,k}\lambda_i
   \end{align}
and
   $$D=\sup_{i\in \mathbb{Z}}\lambda_i.
   $$
Then there exists $m\in \mathbb{N}$ such that $n_m >
k$ and $\lambda_{n_m}<\frac{d^k}{D^{k-1}}$. By
\eqref{w11} we have
   \begin{displaymath}
\lambda_{n_m-i}^k=\lambda_{n_m-i}
\lambda_{n_m-i+1}\cdots\lambda_{n_m-i+k-1}
<\frac{d^k}{D^{k-1}}D^{k-1}=d^k, \quad
i=0,1,\ldots,k-1.
   \end{displaymath}
   Hence
   \begin{equation}\label{ww}
\lambda_{n_m-i}<d, \quad i=0,1, \ldots ,k-1.
   \end{equation}
Since each term of the sequence
$\{\lambda_i\}^{n_m-k}_{i=-\infty}$ is a geometric
mean of $k-1$ positive real numbers smaller then $d$,
we deduce from \eqref{w11} and \eqref{ww} that
$\lambda_n<d$ for every $n\Le n_m$. In particular,
$\lambda_i<d$ for $i=1,2,\ldots,k$, which contradicts
(\ref{dd}). Applying \eqref{w11} again, we easily see
that the sequence $\{\lambda_i\}^0_{i=-\infty}$ is
bounded below from zero. Altogether this implies that
the whole sequence
$\{\lambda_i\}^{\infty}_{i=-\infty}$ is bounded below
from zero.

Now we define the sequence
$\{a_n\}^{\infty}_{n=-\infty}$ by $a_n=\log
\lambda_n$. In view of the previous paragraph, the
sequence $\{a_n\}^{\infty}_{n=-\infty}$ is bounded. It
follows from \eqref{w11} that
$\{a_n\}^{\infty}_{n=-\infty}$ satisfies the following
recurrence relation
   \begin{displaymath}
(k-1)a_n=a_{n+1} + \ldots + a_{n+k-1}, \quad n \in
\mathbb{Z}.
   \end{displaymath}
Hence, by Lemma \ref{ociag}, the sequence
$\{a_n\}^{\infty}_{n=-\infty}$ is constant. It is
easily seen that $W$ is a multiple of a unitary
operator and as such is quasinormal. This completes
the proof.
   \end{proof}
It is worth pointing out that Theorem \ref{mainX} is
no longer true if the bilateral weighted shift is not
bounded.
   \begin{ex}   \label{przyk}
Let $W$ be an injective bilateral weighted shift with
weights $\{\lambda_n\}_{n=-\infty}^{\infty}$ given by
$\lambda_n = \exp((-2)^n)$ for $n\in \mathbb{Z}$. Then
the sequence $\{\lambda_n\}_{n=-\infty}^{\infty}$
satisfies \eqref{w11} with $k=3$, but it does not
satisfy \eqref{w11} with $k=2$. Denote by
$\mathcal{E}$ the linear span of the standard
orthonormal basis $\{e_n\}_{n=-\infty}^{\infty}$ of
$\ell^2(\mathbb{Z})$. Clearly $\mathcal{E} \subset
\mathcal{D}(W) \cap \mathcal{D}(W^*)$, $W(\mathcal{E})
\subset \mathcal{E}$ and $W^*(\mathcal{E}) \subset
\mathcal{E}$. This, by the von Neumann theorem (cf.\
\cite[Theorem 5.3]{weid}), implies that $W^3$ is
closable. Hence, we have
   \begin{align*}
(W^*W)^3|_{\mathcal{E}} = W^{*3}W^{3}|_{\mathcal{E}}
\subset W^{*3}W^{3} \subset (W^{3})^*W^{3} \subset
(\overline{W^{3}})^*\overline{W^{3}}.
   \end{align*}
Since $\mathcal{E}$ is a core for the selfadjoint
operator $(W^*W)^3$ (cf.\ \cite[Proposition
3.4.3]{memo}) and
$(\overline{W^{3}})^*\overline{W^{3}}$ is selfadjoint
(cf.\ \cite[Theorem 5.39]{weid}), we deduce from the
maximality of selfadjoint operators that
   \begin{align*}
(W^*W)^3 = W^{*3}W^{3} =
(\overline{W^{3}})^*\overline{W^{3}}.
   \end{align*}
It is easily seen that $(W^{*} W)^2 \neq W^{*2}W^{2}$.
Hence, $W$ is not quasinormal (cf.\ \cite[Lemma
3.5]{jabl}).
   \end{ex}
   \section{Weighted shifts on directed trees and composition operators}
   \label{Sec5} Our aim in this section is to
construct for every integer $n\Ge 2$ an injective
non-quasinormal weighted shift $S_\lambda\in
\boldsymbol B(\ell^2(V_{\infty}))$ on a directed tree
$\ttt_{\infty}=(V_{\infty},E_{\infty})$ satisfying the
condition \eqref{qqq} with $A=S_\lambda$, where
$\ttt_{\infty}=(V_{\infty},E_{\infty})$ is the
rootless directed tree with one branching vertex
defined by
   \begin{align} \label{drzewo}
V_{\infty}&=\{-k:k\in\mathbb{Z}_{+}\}\sqcup\{(i,j):i,j\in\mathbb{N}\},
   \\ \label{drzewo2}
E_{\infty}&=\{(-k,-k+1):k\in
\mathbb{N}\}\sqcup\{(0,(i,1)):i \in \mathbb{N}\}
   \\ \notag
&\hspace{26.2ex}\sqcup\{((i,j),(i,j+1)):i,j \in
\mathbb{N}\}.
   \end{align}
(The symbol "$\sqcup$" detonates disjoint union of
sets.) The weights of $S_\lambda$ are defined with the
help of three sequences $\{\alpha_i\}_{i=1}^{\infty}$,
$\{\beta_i\}_{i=1}^{\infty}$ and
$\{\gamma_i\}_{i=0}^{\infty}$ of positive real numbers
as follows
   \begin{align} \label{wagi1}
\lambda_v =
   \begin{cases}
   \alpha_i & \text{if } v=(i,1), \, i\in\mathbb{N},
\\
   \beta_i & \text{if } v=(i,j), \, i\in\mathbb{N}, \,
   j\Ge 2,
\\
   \gamma_i & \text{if } v=-i, \, i\in \mathbb{Z}_{+}.
   \end{cases}
   \end{align}
It is a matter of routine to verify that Proposition
\ref{js} takes now the following form.
   \begin{prop}\label{js2} If  $n\Ge 2$ and $S_{\lambda}$
is a weighted shift on the directed tree
$\ttt_{\infty}$ with weights \eqref{wagi1}, then
$(S^{*}_{\lambda}S_{\lambda})^n=S^{*n}_{\lambda}
S^{n}_{\lambda}$ if and only if the following three
conditions hold{\em :}
   \begin{enumerate}
   \item[(i)] $\gamma_{k+n-1}^n=\gamma_{k+n-1}
\gamma_{k+n-2}\cdots\gamma_{k}$ for all $k\in
\mathbb{Z}_{+}$,
      \item[(ii)]$\gamma_{n-i-1}^n=\gamma_{n-i-1}\cdots
\gamma_{0}\sqrt{\sum^{\infty}_{k=1}\alpha_k^2
\beta_k^{2(i-1)}}$ for all $i\in
\{1,2,\ldots,{n-1}\}$,
   \item[(iii)]$\big(\sqrt{\sum^{\infty}_{k=1} \alpha_k^2}\,\big)^n=\sqrt{\sum^{\infty}_{k=1}
\alpha_k^2 \beta_k^{2(n-1)}}$.
\end{enumerate}
\end{prop}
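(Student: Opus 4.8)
The plan is to invoke Proposition \ref{js}, which reduces the operator identity $(S^{*}_{\lambda}S_{\lambda})^n = S^{*n}_{\lambda}S^{n}_{\lambda}$ to the family of scalar identities $\|S_\lambda e_u\|^n = \|S^n_\lambda e_u\|$, one for each vertex $u\in V_\infty$, and then to evaluate both norms explicitly at every vertex of $\ttt_\infty$. Since all weights $\alpha_i,\beta_i,\gamma_i$ are positive, the only computational tool I need is that $\|S^m_\lambda e_u\|^2$ equals the sum, over all directed paths of length $m$ issuing from $u$, of the squared products of the weights met along the path. Thus the whole argument reduces to: (a) sorting the vertices of $\ttt_\infty$ by type, (b) enumerating the length-$1$ and length-$n$ paths from each, and (c) recording which of (i)--(iii) the resulting identity reproduces.

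First I would dispose of the vertices lying on the branches. For $u=(i,j)$ with $j\Ge 1$ the vertex has the single child $(i,j+1)$, and every further descendant is reached through the constant weight $\beta_i$; hence $\|S_\lambda e_{(i,j)}\|=\beta_i$ and $\|S^n_\lambda e_{(i,j)}\|=\beta_i^{\,n}$, so the identity holds automatically and imposes no constraint. The substance is therefore carried entirely by the spine vertices $-i$, $i\in\mathbb{Z}_+$, for which $Chi(-i)=\{-(i-1)\}$ with weight $\gamma_{i-1}$, so that $\|S_\lambda e_{-i}\|=\gamma_{i-1}$ and the left-hand side of the identity is always $\gamma_{i-1}^{\,n}$.

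The one genuine case distinction is whether a directed path of length $n$ starting at $-i$ reaches the branching vertex $0$ before it is exhausted. If $i\Ge n$ the path never branches: it runs $-i\to-(i-1)\to\cdots\to-(i-n)$ through the weights $\gamma_{i-1},\ldots,\gamma_{i-n}$, giving $\|S^n_\lambda e_{-i}\|=\gamma_{i-1}\cdots\gamma_{i-n}$, and writing $k=i-n\in\mathbb{Z}_+$ turns $\gamma_{i-1}^{\,n}=\gamma_{i-1}\cdots\gamma_{i-n}$ into condition (i). If instead $0\Le i\Le n-1$, the path first descends along the spine to $0$ (collecting $\gamma_{i-1}\cdots\gamma_0$), then fans out over the children $(j,1)$ with weight $\alpha_j$, and finally runs $n-i-1$ steps along branch $j$ with weight $\beta_j$, terminating at $(j,\,n-i)$; summing the squared weight-products over $j$ factors the spine part out and yields $\|S^n_\lambda e_{-i}\|=\gamma_{i-1}\cdots\gamma_0\,\big(\sum_{j=1}^{\infty}\alpha_j^2\beta_j^{2(n-i-1)}\big)^{1/2}$. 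For $1\Le i\Le n-1$ the reindexing $i\mapsto n-i$ matches this with condition (ii), while the boundary value $i=0$ (the branching vertex itself, whose spine product is empty) gives exactly condition (iii).

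I expect the only delicate point to be the index bookkeeping: keeping the orientation of the spine straight (the child of $-i$ is $-(i-1)$, carrying weight $\gamma_{i-1}$), counting correctly that a length-$n$ path from $-i$ with $i<n$ spends $i$ steps on the spine, one step across the branching, and $n-i-1$ steps on a branch, and finally performing the substitutions $k=i-n$ and $i\mapsto n-i$ that bring the two spine subcases into the precise shape of (i) and (ii). Once these reindexings are set up correctly the identities fall out term by term, so there is no analytic obstacle beyond this combinatorial accounting.
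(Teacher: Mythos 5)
Your proposal is correct and is precisely the ``matter of routine'' verification the paper has in mind: reduce via Proposition \ref{js} to the norm identities $\|S_\lambda e_u\|^n=\|S^n_\lambda e_u\|$, observe that branch vertices $(i,j)$ impose no constraint, and check that the spine vertices $-i$ yield (i) for $i\Ge n$, (ii) for $1\Le i\Le n-1$ (after the substitution $i\mapsto n-i$), and (iii) for $i=0$. Your index bookkeeping (child of $-i$ is $-(i-1)$ with weight $\gamma_{i-1}$; a length-$n$ path from $-i$ with $i<n$ spends $i$ steps on the spine, one step across the branching, and $n-i-1$ steps along a branch) matches the paper's conventions exactly.
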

   Our next goal is to consider a sequence $\{S_k\}_{k
\in \mathbb{Z}}$ of functions on $(0,1)$ that will
play an essential role in the proof of Theorem
\ref{glowne}. Given $k \in \mathbb{Z}$, we define a
function $S_k:(0,1)\rightarrow (0,\infty)$ by
   \begin{equation*}
S_k(x)=1^k+2^kx+3^kx^2+\ldots, \quad x \in (0,1).
   \end{equation*}
The following formulas are well-known in classical
analysis:
   \begin{align} \label{claan}
\text{$S_0(x)=\frac{1}{1-x}$ and
$S_{-1}(x)=-\frac{\ln(1-x)}{x}$ for $x\in(0,1)$.}
   \end{align}
Below we collect some properties of the functions
$\{S_k\colon k \in \mathbb{Z}_{+}\}$ .
   \begin{lemma} \label{mk}
There exists a $($unique\/$)$ sequence
$\{m_k\}_{k=0}^{\infty} \subset \mathbb{Z}[x]$ such
that
   \begin{enumerate}
   \item[(i)] $m_0=m_1=1$,
   \item[(ii)] the degree of $m_k$ is equal to $k-1$
for every $k\in \mathbb N$,
   \item[(iii)] the leading coefficient of $m_k$ is
equal to $1$ for every $k\in \mathbb{Z}_{+}$,
   \item[(iv)] $1$ is not a root of $m_k$ for
every $k\in \mathbb{Z}_{+}$,
   \item[(v)] $S_k(x)=\frac{m_k(x)}{(1-x)^{k+1}}$ for
every $k\in \mathbb{Z}_{+}$.
   \end{enumerate}
In particular, $S_k(x)\in \mathbb{Q}(x)$ for every $k
\in \mathbb{Z}_{+}$.
   \end{lemma}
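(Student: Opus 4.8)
The plan is to produce the polynomials $m_k$ explicitly by a recurrence and then to read off each of (i)--(v) by induction on $k$. The engine is the elementary analytic identity
\begin{equation*}
S_{k+1}(x)=\frac{d}{dx}\bigl(xS_k(x)\bigr),\qquad x\in(0,1),
\end{equation*}
which results from differentiating the power series $xS_k(x)=\sum_{n=1}^{\infty}n^kx^n$ term by term (permissible on compact subsets of $(0,1)$, inside the radius of convergence). Suppose (v) holds at level $k$, that is $S_k(x)=m_k(x)/(1-x)^{k+1}$. Differentiating $xm_k(x)(1-x)^{-(k+1)}$ by the product rule and factoring out $(1-x)^{-(k+2)}$ gives
\begin{equation*}
S_{k+1}(x)=\frac{(1+kx)m_k(x)+x(1-x)m_k'(x)}{(1-x)^{k+2}}.
\end{equation*}
This computation dictates the definition $m_0:=1$ together with
\begin{equation*}
m_{k+1}(x)=(1+kx)m_k(x)+x(1-x)m_k'(x),\qquad k\in\mathbb{Z}_{+}.
\tag{$\ast$}
\end{equation*}

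With $(\ast)$ in hand, (v) follows by induction: the base case is $S_0(x)=1/(1-x)=m_0(x)/(1-x)$ from \eqref{claan}, and the inductive step is exactly the display above. That each $m_k$ lies in $\mathbb{Z}[x]$ is also immediate by induction, since the operations appearing in $(\ast)$ --- multiplication by $1+kx$, differentiation, multiplication by $x(1-x)$, and addition --- all preserve $\mathbb{Z}[x]$. Property (i) is then visible at once: $m_0=1$, and setting $k=0$ in $(\ast)$ gives $m_1=m_0=1$.

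For (ii) and (iii) I would run one joint induction, starting from $k=1$, where $m_1=1$ has degree $0=1-1$ and leading coefficient $1$. If $m_k$ has degree $k-1$ and leading coefficient $1$, then in $(\ast)$ the summand $(1+kx)m_k$ contributes $kx^k$ as its top term, while $x(1-x)m_k'$ contributes $-(k-1)x^k$; both summands have degree at most $k$, and the coefficient of $x^k$ in $m_{k+1}$ is $k-(k-1)=1$. Hence $m_{k+1}$ has degree exactly $k=(k+1)-1$ with leading coefficient $1$, advancing both (ii) and (iii).

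Finally, (iv) drops out of the evaluation $m_k(1)=k!$: putting $x=1$ in $(\ast)$ annihilates the factor $x(1-x)$ and leaves $m_{k+1}(1)=(k+1)m_k(1)$, so $m_k(1)=k!\neq 0$ by induction from $m_0(1)=1$. Uniqueness is forced by (v) alone, because $m_k(x)=(1-x)^{k+1}S_k(x)$ pins down $m_k$ as an element of $\mathbb{Q}(x)$ and two polynomials agreeing throughout $(0,1)$ must be identical; this also records the concluding assertion $S_k\in\mathbb{Q}(x)$. The only point that repays attention is the leading-coefficient step, where the two top terms nearly cancel and one must check that $k-(k-1)$ lands on exactly $1$; once the recurrence $(\ast)$ is isolated, the remainder is routine induction.
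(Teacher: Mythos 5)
Your proof is correct and follows essentially the same route as the paper: induction on $k$ driven by the identity $S_{k+1}(x)=(xS_k(x))'$, with your recurrence $(\ast)$ being exactly the paper's defining formula $m_{k+1}(x)=(xm_k'(x)+m_k(x))(1-x)+(k+1)xm_k(x)$ after expansion. Your explicit evaluation $m_k(1)=k!$ and the uniqueness argument via polynomials agreeing on $(0,1)$ are slightly more detailed renderings of steps the paper leaves terse, but the substance is identical.
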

   \begin{proof} We use induction on $k$. The
case of $k=0$ follows from \eqref{claan}. Suppose that
$m_0, \ldots, m_k \in \mathbb{Z}[x]$ have the required
properties for a fixed (unspecified) $k\in
\mathbb{Z}_+$. Note that
   \begin{align}\label{rety}
S_{k+1}(x)= (x(1^k+2^kx+3^kx^2+\ldots))^{\prime} =
(xS_k(x))^{\prime}, \quad x \in (0,1).
   \end{align}
By the induction hypothesis and (\ref{rety}), we have
   \begin{align}\label{ostatni}
S_{k+1}(x)=(xS_k(x))^{\prime}=\Big(\frac{xm_k(x)}{(1-x)^{k+1}}\Big)^{\prime}
=\frac{m_{k+1}(x)}{(1-x)^{k+2}}, \quad x \in (0,1),
   \end{align}
with
   \begin{equation} \label{mk+1}
m_{k+1}(x):=(xm_k^{\prime}(x)+m_k(x))(1-x)+(k+1)xm_k(x),
\quad x \in (0,1).
   \end{equation}
Now, it is easily seen that $m_{k+1} \in
\mathbb{Z}[x]$, the leading coefficients of $m_{k+1}$
and $m_k$ coincide and the degree of $m_{k+1}$ is
equal to $k$. Suppose that, contrary to our claim,
$x_0=1$ is a root of $m_{k+1}$. Then, by \eqref{mk+1},
we have
   \begin{equation*}
0=m_{k+1}(1)=(k+1)m_k(1),
   \end{equation*}
which contradicts $m_k(1)\neq 0$. Hence the fraction
appearing on the right-hand side of (\ref{ostatni}) is
irreducible. This also proves the uniqueness of
$m_{k+1}$.
   \end{proof}
We are now ready to construct a weighted shift on a
directed tree and a composition operator $C$ in an
$L^2$-space with the properties mentioned in
Introduction.
   \begin{theorem}\label{glowne}
Let $n$ be an integer greater than or equal to $2$.
Then there exists an injective non-quasinormal
weighted shift $S_\lambda\in \boldsymbol
B(\ell^2(V_{\infty}))$ on the directed tree
$\ttt_{\infty}$ which satisfies the condition
\eqref{qqq} with $A=S_\lambda$. Moreover, there exists
an injective non-quasinormal composition operator $C$
in $L^2$-space over a $\sigma$-finite measure space
satisfying the condition \eqref{qqq} with $A=C$.
   \end{theorem}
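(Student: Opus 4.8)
The strategy is to reduce the construction to choosing the three weight sequences $\{\alpha_i\}$, $\{\beta_i\}$, $\{\gamma_i\}$ so that Proposition~\ref{js2} is satisfied for the prescribed $n$ but violated for every other $k\in\{2,3,\ldots\}$, and then to guarantee that the resulting $S_\lambda$ is bounded, injective, and non-quasinormal. The rootless ``tail'' $\{-k\}$ is governed by condition (i) of Proposition~\ref{js2}, which is precisely the relation $\gamma^{\,n}_{m}=\gamma_m\gamma_{m-1}\cdots\gamma_{m-n+1}$; taking logarithms $a_m=\log\gamma_m$ turns this into the linear recurrence $(n-1)a_m=a_{m-1}+\cdots+a_{m-n+1}$ already analysed in Lemma~\ref{ociag}. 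Since we do \emph{not} want $W$ to be quasinormal, I would choose the $\gamma_i$ on the tail to be genuinely non-constant, which is possible here (unlike in Theorem~\ref{mainX}) because the tail is only one-sided, so the boundedness argument forcing constancy no longer applies.

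First I would fix the branching data. The key algebraic obstruction sits in conditions (ii) and (iii): these tie together the finite product $\gamma_{n-i-1}\cdots\gamma_0$ with the infinite sums $\Sigma_i:=\sum_{k=1}^\infty \alpha_k^2\beta_k^{2(i-1)}$ for $i=1,\ldots,n$. The natural move is to set $\beta_k=\beta$ (a single value in $(0,1)$) and $\alpha_k^2=\alpha^2 r^{k}$ for suitable constants, so that each $\Sigma_i$ becomes a value of the geometric-type series $S_0$, and more generally so that the ratios $\Sigma_{i+1}/\Sigma_i$ are controlled. This is exactly where the functions $S_k$ and Lemma~\ref{mk} enter: expressing the sums $\sum_k \alpha_k^2\beta_k^{2(i-1)}$ through $S_k(x)$ at a point $x\in(0,1)$ lets me compute the defining equalities as rational expressions in $x$, reducing (ii)--(iii) to a finite system of algebraic equations. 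I would solve this system for one free transcendental parameter.

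The decisive step — and the main obstacle — is arranging that \eqref{qqq} holds with \emph{strict} inequality for all $k\neq n$ simultaneously, not merely the single equality for $k=n$. This is where transcendence must be invoked: if the weights satisfied two of the equalities \eqref{w11}-type relations at once, the operator would be forced toward quasinormality, so I must exclude all the ``wrong'' $k$. The mechanism is Corollary~\ref{logarytm}: by selecting the base point $x$ to be an algebraic number $\neq 0,1$ and arranging the competing equalities for $k\neq n$ to reduce to identities of the form $\ln(\alpha)=\text{(algebraic)}$, each such identity is impossible because $\ln(\alpha)$ is transcendental. Concretely, the equalities for different $k$ force incompatible relations between $S_{k}(x)$-values (which lie in $\mathbb{Q}(x)\subset\aaa$ by Lemma~\ref{mk}) and logarithmic quantities like $S_{-1}(x)=-\ln(1-x)/x$ from \eqref{claan}; transcendence of the latter rules out the spurious equalities while the chosen equation for $k=n$ survives by design.

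Finally I would discharge the operator-theoretic bookkeeping: boundedness follows from Proposition~\ref{ogrS} once $\sup_u\sum_{v\in Chi(u)}|\lambda_v|^2<\infty$, which the geometric decay of $\alpha_k$ and boundedness of $\gamma_i,\beta$ ensure; injectivity is immediate from positivity of all weights; and non-quasinormality follows because a quasinormal weighted shift would force all the equalities \eqref{qqq} to hold for every $k$ (via the characterization for $k=2,3$ quoted in the Introduction), contradicting the strict inequalities just established. For the ``moreover'' clause, I would invoke the cited unitary equivalence (\cite[Theorem 3.2.1]{memo}, \cite[Lemma 4.3.1]{9}) between a weighted shift on a rootless tree with nonzero weights and a composition operator in an $L^2$-space over a $\sigma$-finite measure space, transporting all the proven properties of $S_\lambda$ to the composition operator $C$.
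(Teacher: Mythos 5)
Your overall skeleton — reduce to Proposition \ref{js2}, kill the unwanted exponents by playing the rationality of the values $S_k(q)$ (Lemma \ref{mk}) against the transcendence of $S_{-1}(q)=-\ln(1-q)/q$ (Corollary \ref{logarytm}, \eqref{claan}), and transfer everything to a composition operator via the cited unitary equivalence — is indeed the paper's strategy. But your concrete ansatz destroys exactly the mechanism you intend to use, and this is a genuine gap, not a cosmetic one. If $\beta_k=\beta$ is constant and $\alpha_k^2=\alpha^2r^k$ is geometric, then every sum $\Sigma_i:=\sum_{k=1}^\infty\alpha_k^2\beta_k^{2(i-1)}$ equals $\beta^{2(i-1)}\Sigma_1$: all of them are plain geometric series, so no $S_j$ with $j\neq 0$, and in particular no $S_{-1}$, can ever appear — contradicting your own third paragraph. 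Worse, this choice provably forces quasinormality. Condition (iii) of Proposition \ref{js2} at exponent $n$ reads $\Sigma_1^n=\Sigma_n=\beta^{2(n-1)}\Sigma_1$, hence $\Sigma_1=\beta^2$; but then $\Sigma_1^p=\beta^{2(p-1)}\Sigma_1=\Sigma_p$ for \emph{every} $p\Ge 2$, and condition (ii) successively yields $\gamma_0=\gamma_1=\cdots=\beta$, so that $\|S_\lambda^pe_u\|=\|S_\lambda e_u\|^p$ for all vertices $u$ and all $p\Ge 2$. By Proposition \ref{js} the equality $(S_\lambda^*S_\lambda)^p=S_\lambda^{*p}S_\lambda^p$ then holds for every $p\Ge2$; in particular it holds for $p=2,3$, so $S_\lambda$ is quasinormal and the second half of \eqref{qqq} fails. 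The $k$-dependence of $\beta_k$ is indispensable: the paper takes $\alpha_k^2=k^{n-1}q^{k-1}$ and $\beta_k^2=c^{1/(n-1)}/k$ (cf.\ \eqref{wagi}), so that $\Sigma_i=c^{(i-1)/(n-1)}S_{n-i}(q)$; as $i$ grows the index of $S$ decreases, stays in the rational range while $n-i\Ge 0$, and hits the transcendental value $S_{-1}(q)$ precisely at $i=n+1$, which is what eliminates the exponents $p=n+1$ (via condition (iii)) and $p\Ge n+2$ (via condition (ii) evaluated at $i=n+1$, using that the $\gamma$'s are algebraic). Moreover, for $2\Le p\Le n-1$ transcendence is of no help at all; there the paper needs the extra arithmetic condition $c^{k/(n-1)}\notin\mathbb{Q}$ for $k=1,\ldots,n-2$ (cf.\ \eqref{qw2}), secured by taking $q$ to be the reciprocal of a prime and exploiting that $m_{n-1}$ is monic — a step entirely absent from your plan.

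A second, smaller misconception: you cannot ``choose the $\gamma_i$ on the tail to be genuinely non-constant'' in order to prevent quasinormality. Once the equality at exponent $n$ is demanded, conditions (i) and (ii) of Proposition \ref{js2} determine $\{\gamma_i\}_{i=0}^\infty$ uniquely from the $\alpha$'s and $\beta$'s: condition (ii) fixes $\gamma_0,\ldots,\gamma_{n-2}$ (e.g.\ $\gamma_0^{n-1}=\sqrt{\Sigma_{n-1}}$, then inductively the rest), and condition (i) makes each subsequent $\gamma_i$ the geometric mean of its $n-1$ predecessors; there is no freedom left. Non-quasinormality is not produced by the tail at all — it is automatic once \eqref{qqq} is established, since a quasinormal bounded operator satisfies $(A^*A)^k=A^{*k}A^k$ for all $k$ and at least one of $k=2,3$ differs from $n$ (a point you do state correctly at the end). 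So the proposal's outline is faithful to the paper, but the actual weights it proposes cannot work, and the arithmetic choices of $q$ and $c$ that make the exclusion of all $k\neq n$ go through are missing.
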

   \begin{proof}
By \cite[Lemma 4.3.1]{9}, every weighted shift on a
rootless and leafless directed tree with positive real
weights is unitarily equivalent to an injective
composition operator in an $L^2$-space over a
$\sigma$-finite measure space. Hence, it is enough to
construct a weighted shift on a rootless and leafless
directed tree with positive real weights which
satisfies the condition \eqref{qqq} with
$A=S_{\lambda}$. Let $\ttt_{\infty}$ be the directed
tree as in \eqref{drzewo} and \eqref{drzewo2}, and let
$S_{\lambda}$ be a weighted shift on $\ttt_{\infty}$
with weights as in \eqref{wagi1}, where the sequences
$\{\alpha_k\}_{k=1}^{\infty}$ and
$\{\beta_k\}_{k=1}^{\infty}$ are given by
   \begin{equation}\label{wagi}
\text{$\alpha_k=\sqrt{k^{n-1}q^{k-1}}$ and
$\beta_k=\sqrt{\frac{1}{k}c^{\frac{1}{n-1}}}$ for
$k\in \mathbb{N}$;}
   \end{equation}
here the numbers $q$ and $c$ are chosen to satisfy the
following three conditions
   \begin{gather} \label{qw0}
q \in \mathbb{Q} \cap (0,1) \text{ and } c
\in\mathbb{Q}\cap (0,\infty),
   \\ \label{qw} (S_{n-1}(q))^n = cS_{0}(q),
   \\ \label{qw2}
c^{\frac{k}{n-1}}\notin \mathbb{Q}, \quad
k\in\{1,2,\ldots,n-2\}.
   \end{gather}
(Note that the condition \eqref{qw2} is empty when
$n=2$.) Below, we will show how to construct such $c$
and $q$. Let $\{\gamma_k\}^{\infty}_{k=0}$ be a
sequence of positive real numbers uniquely determined
by the recurrence formulas (i) and (ii) of Proposition
\ref{js2}. Since, by \eqref{wagi}, the following
equalities hold
   \begin{align} \label{qw3}
\sum^{\infty}_{k=1} \alpha_k^2 & = S_{n-1}(q),
   \\ \label{qw4}
\sum^{\infty}_{k=1}\alpha_k^2 \beta_k^{2(i-1)} & =
c^{\frac{i-1}{n-1}} S_{n-i}(q), \quad i\in \mathbb Z,
   \end{align}
one can infer from \eqref{qw0} and Lemma \ref{mk} that
$\{\gamma_k\}^{\infty}_{k=0}$ is sequence of algebraic
numbers. According to Proposition \ref{js2}(i), for
every integer $i\Ge n-1$, the term $\gamma_i$ is a
geometric mean of $n-1$ preceding terms
$\gamma_{i-1}$, \ldots, $\gamma_{i-n+1}$. Hence, the
sequence $\{\gamma_i\}_{i=0}^{\infty}$ is bounded (see
the proof of Theorem \ref{mainX}). This combined with
\eqref{qw3}, \eqref{wagi} and Proposition \ref{ogrS}
shows that $S_\lambda\in \boldsymbol
B(\ell^2(V_{\infty}))$.

Now we prove that there exist $c$ and $q$ satisfying
\eqref{qw0}, \eqref{qw} and \eqref{qw2}. For this we
consider a new quantity $c_0\in (0,\infty)$ uniquely
determined by the equation $c=c_0(S_{n-1}(q))^{n-1}$.
Then the equality \eqref{qw} takes the form
$S_{n-1}(q)=c_0S_0(q)$, and thus by \eqref{claan} and
Lemma \ref{mk} we have
   \begin{equation}  \label{c0q}
c_0=\frac{S_{n-1}(q)}{S_0(q)}=\frac{m_{n-1}(q)}{(1-q)^{n-1}}.
   \end{equation}
It is easily seen that $c$ satisfies \eqref{qw0},
\eqref{qw} and \eqref{qw2} if and only if $c_0$ does.
Hence, it remains to construct $q$ and $c_0$. We may
assume that $n\Ge 3$. Let $p$ be a prime number and
let $m_{n-1}(x) = a_{n-2} x^{n-2} + \ldots + a_0$ with
$a_0, \ldots, a_{n-2} \in \mathbb Z$. Set
$q=\frac{1}{p}$. Then, by \eqref{c0q}, we have
   \begin{equation*}
c_0 =
\frac{p(a_0p^{n-2}+\ldots+a_{n-2})}{(p-1)^{n-1}}.
   \end{equation*}
Since, by Lemma \ref{mk}(iii), $a_{n-2}=1$ and $p$ is
prime, an elementary reasoning shows that the number
$c_0$ satisfies \eqref{qw2}. This gives the required
$q$ and $c$.

To complete the proof, it suffices to consider tree
disjunctive cases.
   \begin{case}
$(S^{*}_{\lambda}S_{\lambda})^p \neq S^{*p}_{\lambda}
S^{p}_{\lambda}$ for every $ p \in
\{2,3,\ldots,n-1\}$.
   \end{case}
Indeed, otherwise
$(S^{*}_{\lambda}S_{\lambda})^p=S^{*p}_{\lambda}S^{p}_{\lambda}$
for some $ p \in \{2,3,\ldots,n-1\}$. In view of
Proposition \ref{js2}(iii), we have
   \begin{equation*}
\Big(\sum^{\infty}_{k=1} \alpha_k^2
\Big)^p=\sum^{\infty}_{k=1}\alpha_k^2
\beta_k^{2(p-1)},
   \end{equation*}
which together with \eqref{qw3} and \eqref{qw4} yields
   \begin{equation} \label{qw5.5}
S_{n-1}(q)^p=c^{\frac{p-1}{n-1}}S_{n-p}(q).
   \end{equation}
It follows from \eqref{qw0} and Lemma \ref{mk} that
$S_{n-1}(q) \in \mathbb{Q}$ and $S_{n-p}(q)\in
\mathbb{Q}$. This and \eqref{qw5.5} lead to a
contradiction with \eqref{qw2}.
   \begin{case} $(S^{*}_{\lambda}S_{\lambda})^p \neq S^{*p}_{\lambda}
S^{p}_{\lambda}$ for $p=n+1$.
   \end{case}
Indeed, otherwise arguing as in Case 1 we show that
   \begin{equation}  \label{qw6}
S_{n-1}(q)^{n+1}=c^\frac{n}{n-1}S_{-1}(q).
   \end{equation}
Since the numbers $S_{n-1}(q)$ and $c^{\frac{n}{n-1}}$
are algebraic, the equality \eqref{qw6} leads to a
contradiction with the fact that the number
$S_{-1}(q)=-\frac{\ln(1-q)}{q}$ is transcendental (see
\eqref{claan} and Corollary \ref{logarytm}).
   \begin{case} $(S^{*}_{\lambda} S_{\lambda})^p \neq S^{*p}_{\lambda}
S^{p}_{\lambda}$ for every $p \in \{n+2,n+3,\ldots\}$.
   \end{case}
Indeed, otherwise $(S^{*}_{\lambda} S_{\lambda})^p =
S^{*p}_{\lambda} S^{p}_{\lambda}$ for some $ p \in
\{n+2,n+3,\ldots\}$. Hence, by Proposition
\ref{js2}(ii) and \eqref{qw4}, we have
   \begin{align}   \label{qw7}
\gamma_{p-i-1}^{2p} =
\gamma_{p-i-1}^2\cdots\gamma_{0}^2
c^\frac{i-1}{n-1}S_{n-i}(q), \quad i\in
\{1,2,\ldots,{p-1}\}.
   \end{align}
Substituting $i=n+1$ into \eqref{qw7}, we get
   \begin{align*}
\gamma_{p-n-2}^{2p} =
\gamma_{p-n-2}^2\cdots\gamma_{0}^2
c^\frac{n}{n-1}S_{-1}(q).
   \end{align*}
Since $\{\gamma_k\}^{\infty}_{k=0}$ is sequence of
algebraic numbers, we can argue as in Case 2 to get a
contradiction with the transcendentality of
$S_{-1}(q)$. This completes the proof.
   \end{proof}

\textbf{Acknowledgements.} I would like to thank my supervisor, prof. Jan Stochel for encouragement and motivation, as well as substantial help he provided me while working on this paper.
   \bibliographystyle{amsalpha}
   
\end{document}